\def\ess~inf{\mathop{\rm ess~inf}}
\numberwithin{equation}{section}
\newenvironment{key words}{\emph{\texttt{Keywords}}\mbox{  }}{ }
\newtheorem{theorem}{Theorem}[section]
\newtheorem{lemma}[theorem]{Lemma}
\renewenvironment{proof}{\noindent{\textbf{Proof.}}}{\hfill$\Box$}
\theoremstyle{remark}
\theoremstyle{plain}
\newcommand{\Rmnum}[1]{\expandafter\@slowromancap\romannumeral #1@}
\begin{document}
\setlength{\baselineskip}{1.2\baselineskip}
\title
{\textbf{Convergence problems along curves for generalized Schr\"{o}dinger operators with polynomial growth} \thanks{This work is supported by Natural Science Foundation of China (No.11601427);
China Postdoctoral Science Foundation (No.2017M613193);  Natural Science Basic Research Plan in Shaanxi Province of China (No.2017JQ1009).}}

\author{{Wenjuan Li \qquad \qquad Huiju Wang }  \\
(\small{School of Mathematics and Statistics, Northwestern Polytechnical University,
 Xi'an, 710129, China})\\    }
\date{}
 \maketitle
{\bf Abstract:}\  In this paper we  build the relationship between  smoothness of the functions and convergence rate along curves for a class of generalized Schr\"{o}dinger operators with polynomial growth. We show that the convergence rate  depends only on the growth condition of the phase function and regularity of the curve. Our result can be applied to a wide class of operators. In particular,  convergence results along curves for a class of generalized Schr\"{o}dinger operators with non-homogeneous phase functions is built and then the convergence rate is established.

{\bf Keywords:} Schr\"{o}dinger operator; Convergence; Polynomial growth.

{\bf Mathematics Subject Classification}: 42B20, 42B25, 35S10.

\section{\textbf{Introduction}\label{Section 1}}
Consider the generalized Schr\"{o}dinger equation
\begin{equation}\label{Eq1.1}
\begin{cases}
 \partial_{t}u(x,t)-iP(D)u(x,t) =0 \:\:\:\ x \in \mathbb{R}^{n}, t \in \mathbb{R}^{+},\\
u(x,0)=f \\
\end{cases}
\end{equation}
where $D=\frac{1}{i}(\frac{\partial}{\partial x_{1}},\frac{\partial}{\partial x_{2}},...,\frac{\partial}{\partial x_{n}})$, $P(\xi)$ is a real continuous function defined on $\mathbb{R}^{n}$, $P(D)$ is defined via its real symbol
\[P(D)f(x)= \int_{\mathbb{R}^{n}}{e^{ix \cdot \xi}P(\xi)\hat{f}(\xi)d\xi}.\]
 The solution of (\ref{Eq1.1}) can be formally written as
\begin{equation}
e^{itP(D)}f(x):= \int_{\mathbb{R}^{n}}{e^{ix \cdot \xi +itP(\xi)} \hat{f} (\xi)d\xi },
\end{equation}
where $\hat{f}$ denotes the Fourier transform of $f$.

The Carleson's problem, that is. to determine the optimal $s$ for which
\begin{equation}
\mathop{lim}_{t \rightarrow 0^{+}} e^{itP(D)}f(x) = f(x)
\end{equation}
almost everywhere whenever $f \in H^{s}(\mathbb{R}^{n})$, has been widely studied since the first work by Carleson (\cite{C}), later works see \cite{L}, \cite{V}, \cite{SS}, \cite{S}, \cite{Miao} and references therein. Sharp results were derived in some cases, such as  the elliptic case (\cite{DGL,DZ}, when $n \ge 1$, $P(\xi)=|\xi|^{2}$); the non-elliptic case (\cite{KPV}, when $n \ge 1$, $P(\xi)=\xi_{1}^{2}-\xi_{2}^{2} \pm \cdot \cdot \cdot \pm \xi_{n}^{2}$) and the fractional case (\cite{CK}, when $n \ge 1$ and $P(\xi)= |\xi|^{\alpha}$, $\alpha >1$).

A natural generalization of the convergence problem is to consider almost everywhere convergence along variable curves instead of vertical lines. Let $\gamma$ be a continuous function such that
\[\gamma(x,t): \mathbb{R}^{n} \times [0,1] \rightarrow \mathbb{R}^{n}, \gamma(x,0)=x.\]
Consider the pointwise convergence problem along the curve $(\gamma(x,t),t)$, that is, to determine the optimal $s$ for which
\begin{equation}
\mathop{lim}_{t \rightarrow 0^{+}} e^{itP(D)}(f)(\gamma (x,t)) = f(x)
\end{equation}
almost everywhere whenever $f \in H^{s}(\mathbb{R}^{n})$. This problem has been considered by \cite{CLV, LR, DN1, DN2,LW1}. When $\gamma(x,t)$ is H\"{o}lder continuous with respect to $t$ uniformly for each $x$ and bilipschitz with respect to $x$ for each $t\in[0,1]$, sharp results were obtained by \cite{CLV} for $n=1$, $P(\xi)= |\xi|^{2}$. For smooth curve $\gamma(x,t)\in C^1(\mathbb{R}^n\times\mathbb{R})$, since the corresponding convergence result is equivalent to the pointwise convergence of solutions to the free Schr\"{o}dinger equation with initial data, the convergence rate follows from \cite{LW3}.

%%%%%%%%%%%%%%%%%%%%%%%%%%%%%%%%%%%%%%%%%%%%%%%%%%%%%%%%%%%%%%%%%%%%%%%%%%%%%%%%%%%%%%%%%%%%%%%%%%%%%%%%%%%%%%%%%%%%%%%%%%%%%%%%
However, how is the relationship between smoothness of the functions and the convergence rate along the curves with less smooth condition?
Hence, it is interesting to seek the convergence rate of $e^{itP(D)}f(x)$ as $t$ tends to $0$ along the curve $(\gamma(x,t),t)$ if $f$ has more regularity. The problem is, suppose that $e^{itP(D)}(f)(\gamma(x,t))$ converge pointwisely to $f(x)$ for $f \in H^{s}(\mathbb{R}^{n})$ as $t$ tends to $0$, whether or not it is possible that, for $f \in H^{s + \delta}(\mathbb{R}^{n})$, $\delta \ge 0$,
\begin{equation}
e^{itP(D)}(f)(\gamma(x,t)) - f(x) = o(t^{\theta(\delta)})
\end{equation}
almost everywhere for some $\theta(\delta) \ge 0$? Cao, Fan and Wang \cite{CFW} proved this property in the vertical case $\gamma(x,t)=x$ when $n \ge 1$, $P(\xi) = |\xi|^{2}$, and when $n = 1$, $P(\xi)=|\xi|^{\alpha}, \alpha >1$. Authors of this paper improved the results in \cite{CFW} to general $P(\xi)$ with polynomial growth. It is proved in \cite{LW3} that the convergence rate in the vertical case depends only on the growth condition of $P(\xi)$.

In this paper, we first discuss the convergence rate for a class of Schr\"{o}dinger operators with polynomial growth along curves. Denote by $B(x_{0},R)$ the ball with center $x_{0} \in \mathbb{R}^{n}$ and radius $R \lesssim 1$. Suppose that $\gamma(x,t)$ satisfies
\begin{equation}
|\gamma(x,t)-\gamma(x,t^{\prime})| \le C|t-t^{\prime}|^{\alpha}, \:\ 0<\alpha \le 1
\end{equation}
uniformly for $x \in B(x_{0},R)$ and $t,t^{\prime} \in [0,1]$. Then our first main result is as follows:
\begin{theorem}\label{theorem1.1}
If there exist $m \ge 1$, $s_{0} >0$ such that
\begin{equation}\label{Eq1.7}
|P(\xi)| \lesssim |\xi|^{m}, |\xi| \rightarrow +\infty,
\end{equation}
and for each $s > s_{0}$,
\begin{equation}\label{Eq1.8}
\biggl\|\mathop{sup}_{0<t<1} |e^{itP(D)}(f)(\gamma(x,t))|\biggl\|_{L^{p}(B(x_{0},R))} \lesssim \|f\|_{H^{s}(\mathbb{R}^{n})},  \:\ p \ge 1,
\end{equation}
then for all $f \in H^{s+\delta}(\mathbb{R}^{n})$, $0 \le \delta <m$,
\begin{equation}\label{Eq1.9}
e^{itP(D)}(f)(\gamma(x,t)) - f(x) = o(t^{\alpha \delta /m}), \:\ a.e. \:\:\ x \in B(x_{0},R) \text{\quad as \quad} t \rightarrow 0^{+}.
\end{equation}
\end{theorem}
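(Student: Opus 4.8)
The plan is to deduce the pointwise rate (\ref{Eq1.9}) from a weighted maximal inequality on $B(x_{0},R)$, which in turn is extracted from hypothesis (\ref{Eq1.8}) together with an elementary ``Taylor + H\"{o}lder'' smoothness bound. Fix $s>s_{0}$ and put $\theta:=\alpha\delta/m$; since $0\le\delta<m$ we have $0\le\theta<\alpha$. I would first reduce the theorem to the inequality
\[
\Bigl\|\sup_{0<t<1}t^{-\theta}\bigl|e^{itP(D)}g(\gamma(x,t))-g(x)\bigr|\Bigr\|_{L^{p}(B(x_{0},R))}\ \lesssim\ \|g\|_{H^{s+\delta}(\mathbb{R}^{n})},\qquad g\in H^{s+\delta}(\mathbb{R}^{n}),
\]
which is proved unconditionally below. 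Granting it, for Schwartz $g$ one has $|e^{itP(D)}g(x)-g(x)|\le t\,\|P(\xi)\widehat{g}(\xi)\|_{L^{1}}$ (finite by (\ref{Eq1.7})) and $|g(\gamma(x,t))-g(x)|=|g(\gamma(x,t))-g(\gamma(x,0))|\le C\|\nabla g\|_{L^{\infty}}t^{\alpha}$, so $|e^{itP(D)}g(\gamma(x,t))-g(x)|\lesssim_{g}t^{\alpha}=o(t^{\theta})$ pointwise because $\alpha>\theta$. For $f\in H^{s+\delta}$ decompose $f=g+(f-g)$ with $g$ Schwartz and $\|f-g\|_{H^{s+\delta}}<\eta$; then $\limsup_{t\to0^{+}}t^{-\theta}|e^{itP(D)}f(\gamma(x,t))-f(x)|$ is bounded a.e.\ by $\sup_{0<t<1}t^{-\theta}|e^{itP(D)}(f-g)(\gamma(x,t))-(f-g)(x)|$, and Chebyshev together with the displayed estimate bounds the measure of $\{x\in B(x_{0},R):\limsup_{t\to0^{+}}t^{-\theta}|e^{itP(D)}f(\gamma(x,t))-f(x)|>\varepsilon\}$ by $C\varepsilon^{-p}\eta^{p}$; letting $\eta\to0$ gives (\ref{Eq1.9}).

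To prove the weighted estimate I would Littlewood--Paley decompose $g$ and, for $t$ in a dyadic block $[2^{-k},2^{-k+1}]$, cut at frequency $\lambda_{k}=2^{\beta k}$ with $\beta>0$ to be chosen. For the high part $P_{>\lambda_{k}}g$ use $t^{-\theta}|e^{itP(D)}P_{>\lambda_{k}}g(\gamma(x,t))-P_{>\lambda_{k}}g(x)|\lesssim 2^{k\theta}\bigl(\sup_{0<\tau<1}|e^{i\tau P(D)}P_{>\lambda_{k}}g(\gamma(x,\tau))|+|P_{>\lambda_{k}}g(x)|\bigr)$, the last term being itself dominated by the maximal operator since $e^{i\tau P(D)}P_{>\lambda_{k}}g(\gamma(x,\tau))\to P_{>\lambda_{k}}g(x)$ a.e.\ as $\tau\to0^{+}$ (a consequence of (\ref{Eq1.8})); then (\ref{Eq1.8}) at regularity $s$ bounds the $L^{p}(B(x_{0},R))$ norm by $\lesssim 2^{k\theta}\|P_{>\lambda_{k}}g\|_{H^{s}}\lesssim 2^{k\theta}2^{-\beta k\delta}\|g\|_{H^{s+\delta}}$, summable in $k$ as soon as $\beta>\alpha/m$. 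For the low part split $e^{itP(D)}P_{\le\lambda_{k}}g(\gamma(x,t))-P_{\le\lambda_{k}}g(x)$ as $[e^{itP(D)}-I]P_{\le\lambda_{k}}g(\gamma(x,t))+[P_{\le\lambda_{k}}g(\gamma(x,t))-P_{\le\lambda_{k}}g(x)]$; the first term is $\lesssim t\int_{|\xi|\le\lambda_{k}}|P(\xi)|\,|\widehat{g}(\xi)|\,d\xi\lesssim t\,\lambda_{k}^{(m+n/2-(s+\delta))_{+}}\|g\|_{H^{s+\delta}}$ by (\ref{Eq1.7}) and Cauchy--Schwarz, and the second is $\le C\|\nabla P_{\le\lambda_{k}}g\|_{L^{\infty}}t^{\alpha}\lesssim t^{\alpha}\lambda_{k}^{(1+n/2-(s+\delta))_{+}}\|g\|_{H^{s+\delta}}$. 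Multiplying by $t^{-\theta}\sim2^{k\theta}$ and summing the geometric series in $k$ imposes upper bounds on $\beta$; the admissible window for $\beta$ being nonempty is exactly where $\delta<m$ is used, and it forces the exponent in the conclusion to be precisely $\theta=\alpha\delta/m$, depending only on the growth exponent $m$ in (\ref{Eq1.7}) and the H\"{o}lder exponent $\alpha$ of $\gamma$.

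The delicate step — the one I expect to be the main obstacle — is the low-frequency estimate. Because $\gamma$ is only assumed H\"{o}lder in $t$ with no quantitative control in $x$ (in particular $x\mapsto\gamma(x,t)$ need not be injective), one cannot transport an $L^{p}(B(x_{0},R))$ bound for a frequency-localized function through this composition by a change of variables; one is forced to pass through an $L^{\infty}$ (Bernstein-type) bound for $\|\nabla P_{\le\lambda_{k}}g\|_{L^{\infty}}$, which costs a power of $\lambda_{k}$. The core of the argument is the bookkeeping showing that this loss is absorbed by the $\lambda_{k}^{-\delta}$ decay coming from (\ref{Eq1.8}) together with the slack in the cutoff exponent $\beta$, so that the high and low regimes overlap and the rate $t^{\alpha\delta/m}$ is attained and not worsened. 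Minor additional care is needed according to whether $p\le2$ (use $L^{p}(B(x_{0},R))\hookrightarrow L^{2}$ via H\"{o}lder on the bounded ball) or $p>2$ (an extra Bernstein factor, summed dyadically), and to split off the region $|\xi|\lesssim1$ where (\ref{Eq1.7}) does not directly apply but $P$ is merely bounded.
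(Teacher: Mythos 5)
Your overall architecture matches the paper's (reduce to a weighted maximal inequality, Littlewood--Paley, split into high- and low-frequency regimes tied to a time scale), but there is a genuine gap in the low-frequency estimate that your bookkeeping does not close. You bound
\[
\bigl|[e^{itP(D)}-I]P_{\le\lambda_k}g(\gamma(x,t))\bigr|\lesssim t\int_{|\xi|\le\lambda_k}|P(\xi)|\,|\widehat g(\xi)|\,d\xi
\quad\text{and}\quad
|P_{\le\lambda_k}g(\gamma(x,t))-P_{\le\lambda_k}g(x)|\lesssim t^\alpha\|\nabla P_{\le\lambda_k}g\|_{L^\infty},
\]
i.e.\ you pass to $L^\infty$ and pay a Bernstein factor of order $\lambda_k^{n/2}$. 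Tracking your three constraints on $\beta$ (namely $\beta>\alpha/m$ from the high part and the two upper bounds from the low part) shows that the window is nonempty only when $\alpha(m+n/2-s)<m$, equivalently $s>n/2-m(1/\alpha-1)$. This is an extra hypothesis not present in the theorem; in particular for $\alpha=1$ it forces $s>n/2$, whereas the theorem is supposed to apply with any $s>s_0$ and $s_0$ is frequently well below $n/2$ (for the free Schr\"odinger maximal estimate $s_0=n/(2(n+1))$). It is also not the case that ``the window being nonempty is exactly where $\delta<m$ is used'': the binding inequality $\alpha(m+n/2-s)<m$ does not involve $\delta$ at all. So as written the argument proves a strictly weaker statement and, worse, introduces a dimension dependence that the theorem explicitly avoids.

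The missing idea is the paper's Lemma~\ref{lemma2.1}. On the small-time block (defined so that $t^\alpha\lesssim 1/\lambda$, with $\lambda$ comparable to the frequency of the Littlewood--Paley piece or to $\lambda^{m}$), the displacement $\gamma(x,t)-x$ satisfies $|\lambda(\gamma(x,t)-x)|\lesssim 1$, so one may expand $\phi(\eta)e^{i\lambda(\gamma(x,t)-x)\cdot\eta}$ in a Fourier series on $(-\pi,\pi)^n$ with coefficients $c_{\mathfrak l}(x,t)$ decaying like $(1+|\mathfrak l|)^{-(n+1)}$ \emph{uniformly} in $(x,t)$. This converts the composition with $\gamma$ into an absolutely summable family of \emph{translated} operators $\int e^{i(x+\mathfrak l/\lambda)\cdot\xi+itP(\xi)}\widehat{f_k}(\xi)\,d\xi$, each of which is handled in $L^2(B(x_0,R))$ by Plancherel with no $\lambda^{n/2}$ loss (translations preserve $L^2$, and $B(x_0,R)\subset\mathbb R^n$). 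Combined with the mean-value theorem on the segment from $x$ to $\gamma(x,t)$ for the difference term and the Taylor expansion of $e^{itP(\xi)}$ in $t$ for the propagator term, this yields the paper's bounds (\ref{Eq2.16})--(\ref{Eq2.17}) of size $2^{-s_1k}\|f\|_{H^{s_1+\delta}}$, which are summable in $k$ without any constraint relating $s_0$ to $n$. You correctly identify the composition with the merely H\"older curve as the obstruction, but your conclusion that one is ``forced'' to an $L^\infty$ bound is wrong; the Fourier-series/translation trick is precisely the device that restores the $L^2$ (hence dimension-free) estimate. The high-frequency (large-time) part of your argument is essentially the paper's estimate of the term $I$ and is fine.
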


Note that the convergence rate in Theorem \ref{theorem1.1} depends on the growth condition of the phase function and the regularity of the curve, but independent of the gradient of the phase function and the dimension of the spatial space. In particular, for non-zero Schwartz functions, the convergence rate seems no faster than $t^{\alpha}$ as $t$ tends to $0$ along the curve $(\gamma(x,t),t)$, see Theorem \ref{theorem2.1} below. Theorem \ref{theorem1.1} is quite general and can be applied to a wide class of operators, such as the non-elliptic Schr\"{o}dinger operators ($P(\xi)=\xi_{1}^{2}-\xi_{2}^{2} \pm \cdot \cdot \cdot \pm \xi_{n}^{2}$), the fractional Schr\"{o}dinger operators ($P(\xi)= |\xi|^{\alpha}$, $\alpha >1$). Therefore, the convergence rate along curves can be deduced from Theorem \ref{theorem1.1} provided that the pointwise convergence along curves is proved.

Next, we consider a class of operators with non-homogeneous phase function and obtained the corresponding convergence result and convergence rate. For convenience, we concentrate ourselves on the case $n=2$ and consider a class of operators with phase function
\[P_{m_{1},m_{2}}(\xi) = \xi_{1}^{m_{1}} \pm \xi_{2}^{m_{2}},\]
where $m_{1}, m_{2} \in \mathbb{N}^{+}$, $2 \le m_{1} \le m_{2}$, and a class of curves $\gamma(x,t): \mathbb{R}^{2} \times [0,1]\rightarrow \mathbb{R}^{2}$, $\gamma(x,0)=x$ satisfies
\begin{equation}\label{Eq1.101}
|\gamma(x,t)-\gamma(y,t)| \sim |x-y|,
\end{equation}
\begin{equation}\label{Eq1.111}
|\gamma(x,t)-\gamma(x,t^{\prime})| \lesssim |t-t^{\prime}|^{\frac{1}{m_{1}-1}}
\end{equation}
for each $x,y \in B(x_{0},R)$ and $t,t^{\prime} \in [0,1].$ We have the following result:

\begin{theorem}\label{theorem1.3}
For each $s> 1/2$,
\begin{equation}\label{Eq1.10}
\biggl\|\mathop{sup}_{0<t<1} |e^{itP_{m_{1}, m_{2}}(D)}(f)(\gamma(x,t))|\biggl\|_{L^{2}(B(x_{0},R))} \lesssim \|f\|_{H^{s}(\mathbb{R}^{2})}
\end{equation}
whenever $f \in H^{s}(\mathbb{R}^{2})$.  Moreover, by Theorem \ref{theorem1.1}, for all $f \in H^{s+\delta}(\mathbb{R}^{2})$, $0 \le \delta <m_{2}$,
\begin{equation}\label{Eq1.11}
e^{itP_{m_{1},m_{2}}(D)}(f)(\gamma(x,t)) - f(x) = o(t^{\delta /(m_{1}-1)m_{2}}), \:\ a.e. \:\:\ x \in B(x_{0},R) \text{\quad as \quad} t \rightarrow 0^{+}.
\end{equation}
\end{theorem}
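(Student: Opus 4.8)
The convergence rate \eqref{Eq1.11} is immediate from Theorem \ref{theorem1.1}: apply it with $m=m_{2}$ (legitimate since $|P_{m_{1},m_{2}}(\xi)|\le|\xi_{1}|^{m_{1}}+|\xi_{2}|^{m_{2}}\lesssim|\xi|^{m_{2}}$ as $|\xi|\to\infty$, because $2\le m_{1}\le m_{2}$), with $\alpha=\tfrac{1}{m_{1}-1}$ furnished by \eqref{Eq1.111}, with $p=2$, and with $s_{0}=1/2$ furnished by the first assertion; then $\alpha\delta/m=\delta/((m_{1}-1)m_{2})$. Thus the whole task is the maximal inequality \eqref{Eq1.10}. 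Decompose $f=\sum_{\lambda\ge1}f_{\lambda}$ into Littlewood--Paley pieces with $\widehat{f_{\lambda}}$ supported in $\{|\xi|\sim\lambda\}$, $\lambda$ dyadic (plus a harmless bounded-frequency term). Since $\sum_{\lambda}\lambda^{\frac12+\varepsilon}\|f_{\lambda}\|_{2}\le\bigl(\sum_{\lambda}\lambda^{1+2\varepsilon-2s}\bigr)^{1/2}\bigl(\sum_{\lambda}\lambda^{2s}\|f_{\lambda}\|_{2}^{2}\bigr)^{1/2}\lesssim\|f\|_{H^{s}}$ for $s>1/2$ and $\varepsilon$ small, it suffices to prove, for each dyadic $\lambda$ and each $\varepsilon>0$,
\begin{equation}
\Bigl\|\sup_{0<t<1}\bigl|e^{itP_{m_{1},m_{2}}(D)}f_{\lambda}(\gamma(x,t))\bigr|\Bigr\|_{L^{2}(B(x_{0},R))}\lesssim_{\varepsilon}\lambda^{\frac12+\varepsilon}\|f_{\lambda}\|_{L^{2}(\mathbb{R}^{2})};
\end{equation}
the part of the supremum with $t$ near $0$ is $\lesssim\|f_{\lambda}\|_{2}$, since $e^{itP_{m_{1},m_{2}}(D)}f_{\lambda}(\gamma(x,\cdot))\to f_{\lambda}$.

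To prove this single-frequency bound I would linearize the supremum by a measurable function $t=t(x)\in(0,1)$ and run a $TT^{*}$ argument: by the Schur test on the bounded set $B(x_{0},R)$ it is enough to show $\int|K(x,x')|\,dx'\lesssim_{\varepsilon}\lambda^{1+2\varepsilon}$, where
\begin{equation}
K(x,x')=\int_{\mathbb{R}^{2}}e^{i\Phi(x,x',\xi)}a_{\lambda}(\xi)\,d\xi,\qquad
\Phi=\bigl(\gamma(x,t(x))-\gamma(x',t(x'))\bigr)\cdot\xi+\bigl(t(x)-t(x')\bigr)\bigl(\xi_{1}^{m_{1}}\pm\xi_{2}^{m_{2}}\bigr),
\end{equation}
and $a_{\lambda}$ is a bump on $\{|\xi|\sim\lambda\}$. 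Rescaling $\xi=\lambda\eta$ ($|\eta|\sim1$) and writing $y=\gamma(x,t(x))$, $y'=\gamma(x',t(x'))$, $\tau=t(x)$, $\tau'=t(x')$, the phase in $\eta$ becomes $\lambda(y-y')\cdot\eta+(\tau-\tau')\bigl(\lambda^{m_{1}}\eta_{1}^{m_{1}}\pm\lambda^{m_{2}}\eta_{2}^{m_{2}}\bigr)$, and $|y-y'|\sim|x-x'|$ up to the time displacement $|\tau-\tau'|^{1/(m_{1}-1)}$ by \eqref{Eq1.101}--\eqref{Eq1.111}.

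The crucial feature is that $P_{m_{1},m_{2}}$ splits, so $\eta_{1}$ and $\eta_{2}$ can be handled one at a time. After a dyadic decomposition in $|\eta_{1}|$ and in $|\eta_{2}|$ --- needed because $\partial_{\eta_{j}}^{2}\Phi$ degenerates as $\eta_{j}\to0$ when $m_{j}>2$ --- one argues as follows. If $|y-y'|\gg\lambda^{m_{2}-1}|\tau-\tau'|$ (hence also $\gg\lambda^{m_{1}-1}|\tau-\tau'|$), the linear term dominates the $\eta$-gradient, $|\nabla_{\eta}\Phi|\gtrsim\lambda|y-y'|$, and repeated integration by parts gives $|K|\lesssim_{N}\lambda^{2}(\lambda|y-y'|)^{-N}$; together with $|y-y'|\sim|x-x'|$ away from the scale $|x-x'|\lesssim|\tau-\tau'|^{1/(m_{1}-1)}$, and the trivial bound $|K|\lesssim\lambda^{2}$ on the set $\{|y-y'|\lesssim\lambda^{-1}\}$ (of measure $\lesssim\lambda^{-2}$), this is integrable in $x'$ with total mass $O(1)$. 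Otherwise $|y-y'|$ is confined to a thin set of $x'$, and there one applies van der Corput in $\eta_{1}$ and in $\eta_{2}$: on the bulk $|\eta_{1}|\sim|\eta_{2}|\sim1$ this improves $|K|\lesssim\lambda^{2}$ by $\min\bigl(1,(\lambda^{m_{1}}|\tau-\tau'|)^{-\frac12}\bigr)\min\bigl(1,(\lambda^{m_{2}}|\tau-\tau'|)^{-\frac12}\bigr)$, and near the coordinate axes by a weaker but still summable factor. The exponent $\tfrac{1}{m_{1}-1}$ in \eqref{Eq1.111} is exactly calibrated to the group velocity $\sim\lambda^{m_{1}-1}$ of the $\xi_{1}$-evolution, so that the curve never outruns a frequency-$\lambda$ wave packet --- this is what keeps the $x'$-integration lossless. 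Summing the $|\eta_{1}|$-, $|\eta_{2}|$-pieces and the dyadic time blocks $|\tau-\tau'|\sim2^{-k}$ (only $O(\log\lambda)$ of which are relevant once $|\tau-\tau'|\gtrsim\lambda^{-m_{2}}$, the small-$|\tau-\tau'|$ tail being dealt with by non-stationary phase in $\eta$ alone) finally produces $\int|K(x,x')|\,dx'\lesssim_{\varepsilon}\lambda^{1+2\varepsilon}$.

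The main obstacle, I expect, is twofold: the degenerate stationary phase when $m_{1}>2$ or $m_{2}>2$, which forces a careful dyadic resolution of $K$ near the coordinate axes and a summation of the weaker van der Corput gains there; and the bookkeeping of the joint case analysis in $(\eta,\tau-\tau')$ --- linear term dominant versus stationary point present, and $|\tau-\tau'|$ above or below the thresholds $\lambda^{-m_{1}}$, $\lambda^{-m_{2}}$ --- so that the total loss remains $\lambda^{\varepsilon}$. By contrast, the reduction to a single frequency, the $TT^{*}$/Schur step, and the passage from $|\gamma(x,\tau)-\gamma(x',\tau')|$ to $|x-x'|$ via \eqref{Eq1.101}--\eqref{Eq1.111} are routine.
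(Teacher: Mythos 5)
The derivation of \eqref{Eq1.11} from Theorem \ref{theorem1.1} (with $m=m_{2}$, $\alpha=1/(m_{1}-1)$, $p=2$, $s_{0}=1/2$) is exactly the paper's, so that half is correct. For the maximal inequality \eqref{Eq1.10}, however, you have departed substantially from the paper, which does \emph{not} prove it from scratch: it quotes the vertical-line maximal estimate \eqref{Eq1.12} from Theorem 4.1 of Kenig--Ponce--Vega \cite{KPV} and transfers it to curves via Theorem \ref{theorem1.2}. That transfer is soft: after Littlewood--Paley, split $[0,1]$ into intervals $J$ of length $\lambda^{1-m_{1}}$; on each $J$ the curve moves by $\lesssim\lambda^{-1}$, so by the Fourier-series device of Lemma \ref{lemma2.1} the curve maximal operator on $J$ is dominated by the vertical one on $J$; then the $J$'s are reassembled with the time-localization lemma Theorem \ref{theorem3.1}, whose $TT^{*}$ argument works in the space--time dual $G\in L_{x}^{p'}L_{t}^{1}$ and needs only a crude non-stationary-phase bound when $\mathrm{dist}(J,J')\gg\lambda^{1-m_{1}}$, together with an anisotropic frequency decomposition into the blocks $A_{k}$. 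The oscillatory-integral heavy lifting thus stays inside \cite{KPV}. You instead attempt a self-contained linearized $TT^{*}$/Schur estimate.

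Your self-contained sketch has real gaps, which you flag but do not close. The dichotomy ``$|y-y'|\gg\lambda^{m_{2}-1}|\tau-\tau'|$ versus otherwise $|y-y'|$ confined to a thin set'' is not correct: once $|\tau-\tau'|\gtrsim\lambda^{1-m_{2}}$ the ``thin set'' is the entire ball, so you would instead have to sum over dyadic $|\tau-\tau'|\sim 2^{-k}$ with an honest count, for an arbitrary measurable $t(\cdot)$, of the $x'$-mass at each scale. The degenerate van der Corput loss near the axes when $m_{j}>2$ has to actually be summed, not just acknowledged. And the mismatch of group velocities $\lambda^{m_{1}-1}$ versus $\lambda^{m_{2}-1}$ --- which the paper neutralizes with the $A_{k}$ blocks $\{\xi:|\xi_{1}|/2^{m_{2}k/m_{1}}+|\xi_{2}|/2^{k}\sim 1\}$ inside Theorem \ref{theorem3.1} --- is not addressed by your isotropic rescaling $\xi=\lambda\eta$ and is liable to cost more than $\lambda^{\epsilon}$. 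The plan you outline is plausible but unfinished; the paper's route through \cite{KPV} is shorter and avoids re-deriving the kernel estimate at all.
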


In the rest of the introduction, we briefly sketch the proof of Theorem \ref{theorem1.3}. We only need to prove (\ref{Eq1.10}), (\ref{Eq1.11}) then follows from Theorem \ref{theorem1.1}. Recall that Theorem 4.1 in \cite{KPV} by Kenig-Ponce-Vega shows that for each $s> 1/2$,
\begin{equation}\label{Eq1.12}
\biggl\|\mathop{sup}_{0<t<1} |e^{itP_{m_{1}, m_{2}}(D)}f|\biggl\|_{L^{2}(B(0,1))} \lesssim \|f\|_{H^{s}(\mathbb{R}^{2})}
\end{equation}
whenever $f \in H^{s}(\mathbb{R}^{n})$. (\ref{Eq1.10}) follows from (\ref{Eq1.12}) and Theorem \ref{theorem1.2} below.
\begin{theorem}\label{theorem1.2}
If there exists $s_{0} > 0$ such that for each $s> s_{0}$,
\begin{equation}\label{Eq1.13}
\biggl\|\mathop{sup}_{0<t<1} |e^{itP_{m_{1}, m_{2}}(D)}f|\biggl\|_{L^{p}(B(0,1))} \lesssim \|f\|_{H^{s}(\mathbb{R}^{2})}
\end{equation}
whenever $f \in H^{s}(\mathbb{R}^{2})$, then for each $s> s_{0}$,
\begin{equation}\label{Eq1.14}
\biggl\|\mathop{sup}_{0<t<1} |e^{itP_{m_{1}, m_{2}}(D)}(f)(\gamma(x,t))|\biggl\|_{L^{p}(B(x_{0},R))} \lesssim \|f\|_{H^{s}(\mathbb{R}^{2})}
\end{equation}
whenever $f \in H^{s}(\mathbb{R}^{2})$.
\end{theorem}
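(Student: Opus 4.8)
\ The plan is to transfer the maximal estimate~(\ref{Eq1.13}) from vertical lines to the curve $(\gamma(x,t),t)$ by a Littlewood--Paley decomposition combined with a time localization matched to the H\"{o}lder exponent $1/(m_1-1)$ appearing in~(\ref{Eq1.111}). Write $f=\sum_{\lambda}P_\lambda f$ over dyadic $\lambda\ge 1$; the low--frequency part is harmless, since $\|\,\sup_{0<t<1}|e^{itP_{m_1,m_2}(D)}(P_{\le1}f)(\gamma(x,t))|\,\|_{L^p(B(x_0,R))}\lesssim\|\widehat{P_{\le1}f}\|_{L^1}\lesssim\|f\|_{L^2}$. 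By the triangle inequality for $\sup_{0<t<1}$, then in $\lambda$, together with Cauchy--Schwarz in $\lambda$ and $\|P_\lambda f\|_{H^s}\sim\lambda^{s}\|P_\lambda f\|_{L^2}$, the conclusion~(\ref{Eq1.14}) for every $s>s_0$ follows once one proves, for each $s>s_0$ and each $\varepsilon>0$, the frequency--localized bound
\[
\Bigl\|\sup_{0<t<1}\bigl|e^{itP_{m_1,m_2}(D)}(P_\lambda f)(\gamma(x,t))\bigr|\Bigr\|_{L^p(B(x_0,R))}\lesssim_\varepsilon \lambda^{s+\varepsilon}\|P_\lambda f\|_{L^2(\mathbb{R}^2)},
\]
uniformly in $\lambda$ (the $\varepsilon$--loss is absorbed because the hypothesis is available on the whole open range $s>s_0$).

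Next, fix $\lambda$ and split $[0,1]=\bigcup_{k=1}^{K}I_k$ into $K\sim\lambda^{m_1-1}$ intervals $I_k=[t_k,t_k+\lambda^{-(m_1-1)}]$. For $t\in I_k$, estimate~(\ref{Eq1.111}) gives $|\gamma(x,t)-\gamma(x,t_k)|\lesssim\lambda^{-1}$, so on each $I_k$ the curve stays in a ball of radius $\sim\lambda^{-1}$, i.e.\ below the wavelength of $e^{itP_{m_1,m_2}(D)}P_\lambda f$. Reproducing $P_\lambda f$ against a Schwartz bump adapted to the frequency annulus and using H\"{o}lder's inequality, one gets the pointwise ``local Bernstein'' bound
\[
\sup_{t\in I_k}\bigl|e^{itP_{m_1,m_2}(D)}(P_\lambda f)(\gamma(x,t))\bigr|\lesssim_N\Bigl(\lambda^2\!\int_{\mathbb{R}^2}G(y)^p\bigl(1+\lambda|y-\gamma(x,t_k)|\bigr)^{-N}dy\Bigr)^{1/p},
\]
where $G(y):=\sup_{0<\tau<1}|e^{i\tau P_{m_1,m_2}(D)}(P_\lambda f)(y)|$. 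Raising to the $p$--th power, integrating in $x\in B(x_0,R)$, changing variables $x\mapsto\gamma(x,t_k)$ (bilipschitz by~(\ref{Eq1.101}), with Jacobian $\sim1$), and exchanging the order of integration, the kernel $\lambda^2\langle\lambda\,\cdot\,\rangle^{-N}$ has unit mass with rapid decay; covering $\mathbb{R}^2$ by unit balls and using the translation invariance of the hypothesis~(\ref{Eq1.13}) (hence its validity on every unit ball with the same constant) yields, uniformly in $k$,
\[
\int_{B(x_0,R)}\sup_{t\in I_k}\bigl|e^{itP_{m_1,m_2}(D)}(P_\lambda f)(\gamma(x,t))\bigr|^{p}\,dx\lesssim\|P_\lambda f\|_{H^s}^{p}\sim\lambda^{sp}\|P_\lambda f\|_{L^2}^{p}.
\]

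The remaining, and genuinely delicate, point is to recombine the $K\sim\lambda^{m_1-1}$ intervals into the single supremum over $t\in[0,1]$ while losing at most $\lambda^{\varepsilon}$. The naive bound $\sup_k(\,\cdot\,)\le(\sum_k(\,\cdot\,)^p)^{1/p}$ costs a factor $K^{1/p}\sim\lambda^{(m_1-1)/p}$ and is far too lossy; since at $t=0$ the supremum already dominates $\|P_\lambda f\|_{L^p}$ on a single interval, no uniform gain in $k$ comes for free, and one must exploit the dispersive structure of $e^{itP_{m_1,m_2}(D)}$. The mechanism I expect to use: after a further decomposition of $P_\lambda f$ by spatial location (a partition of unity on the unit grid, harmless up to Schwartz tails), and if necessary by the size of the $\xi_1$--component, stationary/non--stationary phase for the separated phase $\xi_1^{m_1}\pm\xi_2^{m_2}$ shows that each such piece propagates essentially inside a moving tube with velocity $\sim\nabla P_{m_1,m_2}(\xi)$, so that the fixed ball $B(x_0,R)$ is met only during a controlled range of times; the contributions of the distinct spatial pieces are then reassembled by $L^2$--orthogonality (Plancherel) and Fubini, producing only the admissible $\lambda^{\varepsilon}$--loss. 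Reconciling the slow, $t^{1/(m_1-1)}$, motion of $\gamma$ (which is exactly the scale at which $e^{itP_{m_1,m_2}(D)}P_\lambda f$ is unable to resolve the curve) with the faster dispersive spreading of $e^{itP_{m_1,m_2}(D)}$, simultaneously for the $\xi_1^{m_1}$ and the $\xi_2^{m_2}$ behaviours (this is where $2\le m_1\le m_2$ enters), is the crux. Note that the difficulty disappears in the trivial case $\gamma(x,t)\equiv\Gamma(x)$ independent of $t$, where $\sup_{0<t<1}$ commutes with the composition and~(\ref{Eq1.14}) follows immediately from~(\ref{Eq1.13}) and a bilipschitz change of variables; Step~3 is precisely the quantitative statement that the $t$--dependence of $\gamma$ does not spoil this.

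Combining the per--interval bound of Step~2 with the recombination of Step~3 gives the displayed frequency--localized estimate, and summing over dyadic $\lambda$ as in Step~1 completes the proof of~(\ref{Eq1.14}).
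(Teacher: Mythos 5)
Your overall architecture coincides with the paper's: Littlewood--Paley reduction to a single annulus $|\xi|\sim\lambda$, partition of $[0,1]$ into $\sim\lambda^{m_1-1}$ intervals of length $\lambda^{1-m_1}$ so that (\ref{Eq1.111}) confines $\gamma(x,\cdot)$ to a ball of radius $\sim\lambda^{-1}$ on each interval, a Fourier-expansion/reproducing-kernel transfer of the vertical estimate to the curve on each interval (this is the paper's Lemma 2.2 of \cite{LR} step, and your ``local Bernstein'' bound is an acceptable variant of it), and finally a recombination of the intervals with only an $\varepsilon$-loss. Your identification of the recombination as the crux, and your observation that the naive $K^{1/p}$ bound is useless, are both correct.

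However, the recombination step is precisely where your proposal stops being a proof: you write ``the mechanism I expect to use'' and offer only a heuristic (spatial decomposition, propagation tubes, $L^2$-orthogonality) without carrying it out. This is a genuine gap, and it is exactly the content of the paper's Theorem \ref{theorem3.1}, which is the \emph{only} part of Theorem \ref{theorem1.2} the authors prove in detail. Their argument is a duality/$TT^{\star}$ almost-orthogonality scheme rather than a physical-space tube decomposition: one dualizes the maximal operator against $G\in L^{p^{\prime}}_x L^1_t$, expands $\|T^{\star}G\|_{L^2}^2=\sum_{J,J^{\prime}}\langle T^{\star}G_J,T^{\star}G_{J^{\prime}}\rangle$, uses the hypothesis for pairs with $\mathrm{dist}(J,J^{\prime})\lesssim\lambda^{1-m_1}$, and for distant pairs shows the kernel $K(x,y,t,t^{\prime})$ of $TT^{\star}$ is $O(\lambda^{-N})$ by non-stationary phase, then sums with a Schur-type bound using $1\le p^{\prime}\le 2$. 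A point you would likely miss even if you tried to execute your tube heuristic: because the phase $\xi_1^{m_1}\pm\xi_2^{m_2}$ is non-homogeneous and anisotropic, the lower bound $|\nabla_{\eta}\Phi|\gtrsim 2^{m_2k}|t-t^{\prime}|$ needed for the integration by parts fails on the full annulus $|\xi|\sim\lambda$; the paper must first perform an additional anisotropic frequency decomposition into the $\sim\log\lambda$ sets $A_k=\{\xi:|\xi_1|2^{-m_2k/m_1}+|\xi_2|2^{-k}\sim1\}$ (this costs only the admissible $\lambda^{\varepsilon}$) before the kernel estimate goes through. Your sketch mentions a decomposition ``by the size of the $\xi_1$-component'' only as an afterthought, but without it the dispersive decay you are invoking is not available uniformly, so the step you flag as delicate would indeed fail as written.
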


When $m_{1}=m_{2}$, result as in Theorem \ref{theorem1.2} was first obtained by  Cho, Lee and Vargas (\cite{CLV}, Proposition 4.3). In order to prove Theorem \ref{theorem1.2}, after Littlewood-Paley decomposition, we only need to  consider $f$, suppp$\hat{f} \subset \{\xi: |\xi| \sim \lambda\}$, $\lambda \gg 1$. We decompose [0,1] into bounded overlap intervals $\mathfrak{J}= \cup_{J \in \mathfrak{J}}J$, each $J$ is of length $\lambda^{1-m_{1}}$. For each $J$, it follows from (\ref{Eq1.13}) that
\begin{equation}\label{Eq1.15}
\biggl\|\mathop{sup}_{t \in J} |e^{itP_{m_{1}, m_{2}}(D)}f|\biggl\|_{L^{p}(B(x_{0},R))} \lesssim \lambda^{s_{0}+\epsilon} \|f\|_{L^{2}}, \:\ \forall \epsilon >0
\end{equation}
As Lemma 2.2 in \cite{LR}, inequalities (\ref{Eq1.15}),  (\ref{Eq1.101}), (\ref{Eq1.111}) imply
\begin{equation}\label{Eq1.16}
\biggl\|\mathop{sup}_{t \in J} |e^{itP_{m_{1}, m_{2}}(D)}(f)(\gamma(x,t))|\biggl\|_{L^{p}(B(x_{0},R))} \lesssim \lambda^{s_{0}+\epsilon} \|f\|_{L^{2}}, \:\ \forall \epsilon >0.
\end{equation}
Inequality (\ref{Eq1.14}) then follows from (\ref{Eq1.16}) and a time localizing lemma:

\begin{theorem}\label{theorem3.1}
Let $\mathfrak{J}=\{J\}$ be a collection of intervals of length $\lambda^{1-m_{1}}$ with bounded overlap, $[0,1]=\bigcup_{J \in \mathfrak{J}}J$. Suppose that for some $\alpha >0$, $p \ge 2$,
\begin{equation}\label{Eq2.3.1}
\biggl\|\mathop{sup}_{t \in J} |e^{itP_{m_{1}, m_{2}}(D)}(f)(\gamma(x,t))|\biggl\|_{L^{p}(B(x_{0},R))} \lesssim \lambda^{\alpha} \|f\|_{L^{2}}
\end{equation}
provided that suppp$\hat{f} \subset \{\xi: |\xi| \sim \lambda\}$, then for any $\epsilon >0$, we have
\begin{equation}\label{Eq2.3.1}
\biggl\|\mathop{sup}_{0<t<1} |e^{itP_{m_{1}, m_{2}}(D)}(f)(\gamma(x,t))|\biggl\|_{L^{p}(B(x_{0},R))} \lesssim \lambda^{\alpha+\epsilon} \|f\|_{L^{2}}
\end{equation}
whenever suppp$\hat{f} \subset \{\xi: |\xi| \sim \lambda\}$.
\end{theorem}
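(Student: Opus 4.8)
The plan is to obtain the global-in-time maximal estimate from the single-interval hypothesis by a time-localization argument in the spirit of the work of Cho, Lee and Vargas \cite{CLV} (Proposition~4.3) and of Lee and Rogers \cite{LR}, the only new feature being the non-homogeneity of the symbol $P_{m_1,m_2}$. Throughout, $\hat f$ is supported in $\{\xi:|\xi|\sim\lambda\}$ as in the hypothesis, and the curve $\gamma$ is essentially a passenger: it is carried through every step, its only roles being the bi-Lipschitz bound (\ref{Eq1.101}), used for the change of variables $y=\gamma(x,t)$, and the H\"older bound (\ref{Eq1.111}), which forces the interval length to be $\lambda^{1-m_1}$. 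First I would fix the covering $[0,1]=\bigcup_{J\in\mathfrak{J}}J$ of bounded overlap, so that $\#\mathfrak{J}\sim\lambda^{m_1-1}$, and record two elementary facts. (i) The hypothesis is invariant under time translation: replacing $f$ by $e^{i\tau P_{m_1,m_2}(D)}f$ preserves $\|f\|_{L^2}$ and the Fourier support, while $\gamma(x,\cdot)$ may be replaced by $\gamma(x,\cdot+\tau)$, which still satisfies (\ref{Eq1.101}) and (\ref{Eq1.111}); hence the single-interval bound holds for every interval of length $\lambda^{1-m_1}$. (ii) One has the cheap global bound $\|e^{itP_{m_1,m_2}(D)}f(\gamma(x,t))\|_{L^2_{x,t}(B(x_0,R)\times[0,1])}\lesssim\|f\|_{L^2}$ from Fubini, the $L^2$-isometry, and (\ref{Eq1.101}), together with the cheap fixed-time bound $\|e^{i\tau P_{m_1,m_2}(D)}f(\gamma(\cdot,\tau))\|_{L^p(B(x_0,R))}\lesssim\lambda^{n(1/2-1/p)}\|f\|_{L^2}$ for $p\ge2$ (no loss when $p=2$), again by the change of variables and Bernstein.

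Next I would pass to the model picture. On an interval $J$ with centre $t_J$, modulating out gives $e^{itP_{m_1,m_2}(D)}f=e^{i(t-t_J)P_{m_1,m_2}(D)}g_J$ with $g_J:=e^{it_JP_{m_1,m_2}(D)}f$, so by (i) the maximal function over a single $J$ is bounded by $\lambda^{\alpha}\|f\|_{L^2}$; the whole difficulty is to glue the $\sim\lambda^{m_1-1}$ of these together losing only $\lambda^{\epsilon}$. A naive $\ell^p$-summation over $J$ is hopeless, since it costs a factor $\lambda^{(m_1-1)/p}$, so the argument must use cancellation. Following \cite{CLV,LR}, I would decompose $f=\sum_\nu f_\nu$ along a frequency partition into caps adapted to the phase --- rectangles on which the relevant quadratic behaviour of $\xi_1^{m_1}\pm\xi_2^{m_2}$ is essentially frozen over the time span $[0,1]$, with dimensions dictated by the ratio $\lambda^{m_1-1}$ between the full time span and the interval length. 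These pieces are frequency-disjoint, hence $L^2$-orthogonal; and on each cap the evolution over $[0,1]$ is, after an affine change of the spatial and temporal variables, conjugate to the evolution of a function of frequency $\sim\lambda$ over a single interval of length $\sim\lambda^{1-m_1}$, to which fact (i) applies. Carrying this out cap by cap and reassembling --- by a Rubio de Francia type square-function estimate, valid for $p\ge2$, together with a dyadic pigeonhole in the size of the maximal function to pass from the square function back to the supremum --- should produce the global bound with the claimed $\lambda^{\epsilon}$ loss. This is the conclusion of Theorem~\ref{theorem3.1}; combined with (\ref{Eq1.16}) it gives (\ref{Eq1.14}), hence, via (\ref{Eq1.12}) and Theorem~\ref{theorem1.2}, the maximal estimate (\ref{Eq1.10}), and then, through Theorem~\ref{theorem1.1}, the convergence rate (\ref{Eq1.11}).

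The step I expect to be the real obstacle is precisely this last one: choosing the cap decomposition so that the affine conjugation is faithful enough for the single-interval hypothesis to apply, and then controlling the reassembly without more than an $\epsilon$-loss. The non-homogeneity of $P_{m_1,m_2}$ is the genuine difference from \cite{CLV}: there is no clean parabolic (or homogeneous) rescaling of the kind available for $|\xi|^2$, so one must rescale the two frequency variables separately and only after restricting to caps on which the cross term and the higher-order terms of $\xi_1^{m_1}\pm\xi_2^{m_2}$ are controlled over $[0,1]$ --- which is exactly where the precise shape of the symbol and the matching $|J|=\lambda^{1-m_1}$ must enter. Everything else --- the time-translation reduction, the change of variables, Bernstein, and the bookkeeping of $\lambda^{\epsilon}$ factors --- is routine.
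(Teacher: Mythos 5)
Your reduction (every interval of length $\lambda^{1-m_1}$ is covered by the hypothesis, and the problem is to glue $\sim\lambda^{m_1-1}$ local maximal bounds with only an $\epsilon$-loss) is the right starting point, but the mechanism you propose for the gluing --- a partition of $\{|\xi|\sim\lambda\}$ into caps on which the evolution over $[0,1]$ is \emph{affinely conjugate} to a frequency-$\lambda$ evolution over a single interval of length $\lambda^{1-m_1}$ --- is precisely the step that does not go through, and you say so yourself. For $P_{m_1,m_2}(\xi)=\xi_1^{m_1}\pm\xi_2^{m_2}$ with $m_i\ge 3$ there is no Galilean/parabolic symmetry: writing $\xi=\xi_\nu+\eta$ produces, besides the linear term (a spatial translation, harmless), quadratic and higher terms $t\binom{m_i}{2}\xi_{\nu,i}^{m_i-2}\eta_i^2+\cdots$ that cannot be absorbed by any affine change of $(x,t)$, so the cap evolution is not conjugate to an instance of the hypothesis. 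Moreover the reassembly is also lossy as stated: Rubio de Francia controls $\|\sum_\nu F_\nu(\cdot,t)\|_p$ for fixed $t$, but passing $\sup_t$ inside a sum over polynomially many caps costs, via Cauchy--Schwarz, the square root of the number of caps, which is far more than $\lambda^\epsilon$; a dyadic pigeonhole in the size of the maximal function does not remove this. So the core of the proof is missing.

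The idea that actually closes the argument (and is what the paper does) is different: one does not rescale time at all. After a purely logarithmic anisotropic frequency decomposition $f=\sum_k f_k$ with $\mathrm{supp}\,\hat f_k\subset\{|\xi|\sim\lambda\}\cap\{|\xi_1|2^{-m_2k/m_1}+|\xi_2|2^{-k}\sim1\}$ (only $O(\log\lambda)$ values of $k$ occur, so this triangle-inequality step already accounts for the $\lambda^\epsilon$), one linearizes the supremum by duality and runs a $TT^{\star}$ argument on $\sum_{J,J'}\langle T^{\star}G_J,T^{\star}G_{J'}\rangle$. For $\mathrm{dist}(J,J')\gtrsim\lambda^{1-m_1}$ the kernel of $TT^{\star}$ is negligible by non-stationary phase: on the rescaled support the temporal part of the phase gradient has size $2^{m_2k}|t-t'|$, which dominates the spatial part $2^{m_2k/m_1}$ exactly when $|t-t'|\gtrsim\lambda^{1-m_1}$ (this is where the interval length and the shape of the symbol enter), so integration by parts gives $O(\lambda^{-N})$. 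Diagonal pairs are handled by the hypothesis and Cauchy--Schwarz, and the sum over $J$ closes because $p'\le2$. You may recognize this as the almost-orthogonality scheme of Lee--Rogers; I would encourage you to redo the proof along these lines, since the conjugation-based route you sketch genuinely requires a symmetry the non-homogeneous symbol does not possess.
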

Based on our previous argument, we will omit some details and only prove Theorem \ref{theorem3.1} in Section 3.

%%%%%%%%%%%%%%%%%%%%%%%%%%%%%%%%%%%%%%%%%%%%%%%%%%%%%%%%%%%%%%%%%%%%%%%%%%%%%%%%%%%%%%%%%%%%%%%%%%%%%%%%%%%%%%%%%%%%%%%%%%%%%%%
\section{Proof of Theorem \ref{theorem1.1}}\label{Section 2}
We first prove the following Lemma \ref{lemma2.1}.
\begin{lemma}\label{lemma2.1}
Assume that $g$ is a Schwartz function whose Fourier transform is supported in the annulus $A(\lambda)=\{\xi \in \mathbb{R}^{n}: |\xi| \sim \lambda\}$. $\gamma(x,t)$ satisfies
\[|\gamma(x,t)- x| \lesssim t^{\alpha},\hspace{0.8cm} \gamma(x,0)=0\]
for all $x \in B(x_{0},R)$ and $t \in (0, \lambda^{-\frac{1}{\alpha}})$. Then for each $x \in B(x_{0},R)$ and $t \in (0, \lambda^{-\frac{1}{\alpha}})$,
\begin{align}\label{Eq2.1}
|e^{itP(D)}g(\gamma(x,t))| \le \sum_{\mathfrak{l} \in \mathbb{Z}^{n}}{\frac{C_{n}}{(1+|\mathfrak{l}|)^{n+1}}\biggl|\int_{\mathbb{R}^{n}}{e^{i(x+ \frac{\mathfrak{l}}{\lambda})\cdot\xi+itP(\xi)}\hat{g}}(\xi)d\xi \biggl|}.
\end{align}
\end{lemma}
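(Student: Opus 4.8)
The plan is to exploit the fact that $\gamma(x,t)$ stays within distance $\sim t^\alpha \lesssim \lambda^{-1}$ of $x$ on the relevant time window, so that replacing the evaluation point $\gamma(x,t)$ by $x$ costs only a harmless shift of size $\lesssim 1/\lambda$ in frequency-localized coordinates. Concretely, write
\[
e^{itP(D)}g(\gamma(x,t)) = \int_{\mathbb{R}^n} e^{i\gamma(x,t)\cdot\xi + itP(\xi)}\widehat g(\xi)\,d\xi,
\]
and insert a smooth cutoff $\psi$ adapted to $A(\lambda)$ with $\psi \equiv 1$ on the support of $\widehat g$, so that $\widehat g = \psi(\cdot/\lambda)\widehat g$. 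The difference $\gamma(x,t) - x$ has size $\lesssim t^\alpha < \lambda^{-1}$, so on the frequency scale $|\xi|\sim\lambda$ the factor $e^{i(\gamma(x,t)-x)\cdot\xi}$ is a slowly varying multiplier of "bandwidth" $O(1)$ in the rescaled variable $\xi/\lambda$.

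The key step is a Fourier-series (periodization) expansion. After rescaling $\xi = \lambda\eta$, the function $\eta \mapsto e^{i(\gamma(x,t)-x)\cdot \lambda\eta}\,\psi(\eta)$ is smooth and supported in a fixed annulus $|\eta|\sim 1$; since $|\gamma(x,t)-x|\lambda \lesssim 1$, expanding $e^{i(\gamma(x,t)-x)\cdot\lambda\eta}\psi(\eta)$ (restricted to a fixed large cube $Q\supset \{|\eta|\sim 1\}$) in a Fourier series in $\eta$ over $Q$ gives coefficients $a_{\mathfrak{l}}(x,t)$ that, by repeated integration by parts in $\eta$ using the uniform smoothness bounds, satisfy $|a_{\mathfrak{l}}(x,t)| \le C_n (1+|\mathfrak{l}|)^{-(n+1)}$ uniformly in $x\in B(x_0,R)$ and $t\in(0,\lambda^{-1/\alpha})$. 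Substituting the expansion $e^{i(\gamma(x,t)-x)\cdot\lambda\eta}\psi(\eta) = \sum_{\mathfrak{l}} a_{\mathfrak{l}}(x,t)\, e^{2\pi i \mathfrak{l}\cdot\eta/L}$ back (with the cube $Q$ chosen of side $L$ a fixed constant, which after absorbing $2\pi/L$ into the definition of the lattice sum produces the shift $\mathfrak{l}/\lambda$ in the original variable) yields
\[
e^{itP(D)}g(\gamma(x,t)) = \sum_{\mathfrak{l}\in\mathbb{Z}^n} a_{\mathfrak{l}}(x,t)\int_{\mathbb{R}^n} e^{i(x + \mathfrak{l}/\lambda)\cdot\xi + itP(\xi)}\,\widehat g(\xi)\,d\xi,
\]
where I have used $\psi\widehat g = \widehat g$ to reinstate $\widehat g$ inside the integral. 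Taking absolute values and using $|a_{\mathfrak{l}}(x,t)| \le C_n(1+|\mathfrak{l}|)^{-(n+1)}$ gives exactly \eqref{Eq2.1}.

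The main obstacle is bookkeeping the rescaling and the choice of the periodization cube so that the decay $(1+|\mathfrak{l}|)^{-(n+1)}$ is genuinely \emph{uniform} in $x$ and $t$: one must check that all derivatives in $\eta$ of $e^{i(\gamma(x,t)-x)\cdot\lambda\eta}\psi(\eta)$ up to order $n+1$ are bounded by constants depending only on $n$ (and the fixed cutoff), which is where the hypothesis $|\gamma(x,t)-x|\lesssim t^\alpha$ together with $t<\lambda^{-1/\alpha}$ — hence $|\gamma(x,t)-x|\lambda \lesssim 1$ — is used, each $\eta$-derivative bringing down a factor $|\gamma(x,t)-x|\lambda = O(1)$. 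The exponent $n+1$ is exactly what is needed for the lattice sum $\sum_{\mathfrak{l}}(1+|\mathfrak{l}|)^{-(n+1)}$ to converge, so no room is wasted; if desired one can take more derivatives to get faster decay, but $n+1$ suffices. The term $\gamma(x,0)=0$ in the hypothesis is only used to make sense of the expansion at $t=0$ (where the statement is trivial), and plays no essential role.
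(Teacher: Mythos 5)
Your proposal is correct and follows essentially the same route as the paper: rescale $\xi=\lambda\eta$, insert a compactly supported cutoff equal to $1$ on the rescaled annulus, expand $e^{i\lambda(\gamma(x,t)-x)\cdot\eta}\cdot(\text{cutoff})$ in a Fourier series using $|\lambda(\gamma(x,t)-x)|\lesssim 1$, obtain the $(1+|\mathfrak{l}|)^{-(n+1)}$ decay of the coefficients by integration by parts, then undo the scaling so the lattice shift becomes $\mathfrak{l}/\lambda$. The only cosmetic difference is that the paper pins the periodization cube to $(-\pi,\pi)^n$ from the start (so the Fourier frequencies are exactly the integers and the shift comes out as precisely $\mathfrak{l}/\lambda$), whereas you work over a generic cube of side $L$ and absorb the scale into the lattice; this is the same idea, just stated less precisely.
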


\begin{proof}
As in \cite{LR}, we introduce a cut-off function $\phi$ which is smooth and equal to $1$ on $B(0,2)$ and supported on $(-\pi, \pi)^{n}$. After scaling we have
\begin{align}
e^{itP(D)}(g)(\gamma(x,t))&= \lambda^{n} \int_{\mathbb{R}^{n}}{e^{i\lambda \gamma(x,t)\cdot \eta + itP(\lambda \eta)} \phi(\eta)\hat{g}(\lambda \eta) d\eta} \nonumber\\
&= \lambda^{n} \int_{\mathbb{R}^{n}}{e^{i\lambda \gamma(x,t)\cdot \eta -i\lambda x \cdot \eta  +i\lambda x \cdot \eta + itP(\lambda \eta)} \phi(\eta)\hat{g}(\lambda \eta) d\eta}.
\end{align}
Since
\[|\lambda\gamma(x,t)-\lambda x| \lesssim 1,\]
then by Fourier expansion,
\[\phi(\eta)e^{i\lambda[\gamma(x,t)-x] \cdot \eta} = \sum_{\mathfrak{l} \in \mathbb{Z}^{n}}{c_{\mathfrak{l}}(x,t)e^{i\mathfrak{l}\cdot \eta}},\]
where
\[|c_{\mathfrak{l}}(x,t)| \lesssim \frac{C_{n}}{(1+|\mathfrak{l}|)^{n+1}}\]
uniformly for each $\mathfrak{l} \in \mathbb{Z}^{n}$, $x \in B(x_{0},R)$ and $t \in (0, \lambda^{-\frac{1}{\alpha}})$. Then we have
\begin{align}
|e^{itP(D)}g(\gamma(x,t))| &\le \sum_{\mathfrak{l} \in \mathbb{Z}^{n}}{\frac{C_{n}\lambda^{n}}{(1+|\mathfrak{l}|)^{n+1}}\biggl|\int_{\mathbb{R}^{n}}{e^{i \mathfrak{l} \cdot \eta  +i\lambda x \cdot \eta + itP(\lambda \eta)} \hat{g}(\lambda \eta) d\eta} \biggl|} \nonumber\\
&= \sum_{\mathfrak{l} \in \mathbb{Z}^{n}}{\frac{C_{n}}{(1+|\mathfrak{l}|)^{n+1}}\biggl|\int_{\mathbb{R}^{n}}{e^{i \frac{\mathfrak{l}}{\lambda} \cdot \xi  +i x \cdot \xi + itP(\xi)} \hat{g}(\xi) d\xi} \biggl|}, \nonumber
\end{align}
then we arrive at (\ref{Eq2.1}).
\end{proof}

\textbf{Proof of Theorem \ref{theorem1.1}.}
It suffices to show that for some $q \ge 1$ and $\forall \epsilon >0$,  $s_{1} = s_{0} + \epsilon$,
\begin{equation}\label{Eq2.3}
\biggl\|\mathop{sup}_{0<t<1} \frac{|e^{itP(D)}(f)(\gamma(x,t))-f(x)|}{t^{\alpha\delta/m}}\biggl\|_{L^{q}(B(x_{0},R))} \lesssim \|f\|_{H^{s_{1}+ \delta}(\mathbb{R}^{n})}.
\end{equation}

In fact, if (\ref{Eq2.3}) holds, then fix $\lambda >0$,  choose $g \in C_{c}^{\infty}(\mathbb{R}^{n})$ such that
\[\|f-g\|_{H^{s_{1}+ \delta}(\mathbb{R}^{n})} \le  \frac{\lambda \epsilon^{1/q}}{2},\]
it follows
\begin{align}\label{Eq2.4}
&\biggl|\biggl\{{ x \in B(x_{0},R): \mathop{sup}_{0<t<1} \frac{|e^{itP(D)}(f-g)(\gamma(x,t))-(f-g)(x)|}{t^{\alpha\delta/m}}} > \frac{\lambda}{2}\biggl\}\biggl|  \nonumber\\
&\le \frac{2^{q}}{\lambda^{q}} \biggl\|\mathop{sup}_{0<t<1} \frac{|e^{itP(D)}(f-g)(\gamma(x,t))-(f-g)(x)|}{t^{\alpha\delta/m}}\biggl\|_{L^{q}(B(x_{0},R))}^{q} \nonumber\\
&\lesssim \frac{2^{q}}{\lambda^{q}}\|f-g\|_{H^{s_{1}+ \delta}(\mathbb{R}^{n})}^{q} \nonumber\\
&\le \epsilon.
\end{align}
Moreover,
\begin{align}\label{Eq2.5}
\frac{|e^{itP(D)}(g)(\gamma(x,t))-g(x)|}{t^{\alpha\delta/m}}  \rightarrow 0,\hspace{0.5cm} \textmd{if } \hspace{0.2cm}t \rightarrow 0^{+}
\end{align}
uniformly for $x \in B(x_{0},R)$. Indeed, for each $x \in B(x_{0},R)$,
\begin{align}\label{Eq2.6}
\mathop{lim}_{t \rightarrow 0^{+}}{\frac{| e^{itP(D)}(g)(\gamma(x,t))-g(x)|}{t^{\alpha\delta /m}}} &\le \mathop{lim}_{t \rightarrow 0^{+}}{\frac{| e^{itP(D)}(g)(\gamma(x,t))-g(\gamma(x,t))|}{t^{\alpha\delta /m}}} \nonumber\\
& \:\:\ + \mathop{lim}_{t \rightarrow 0^{+}}{\frac{|g(\gamma(x,t))-g(x)|}{t^{\alpha\delta /m}}}.
\end{align}
By mean value theorem, we have
\begin{align}\label{Eq2.7}
\frac{| e^{itP(D)}(g)(\gamma(x,t))-g(\gamma(x,t))|}{t^{\alpha\delta /m}} &\le t^{1-\alpha\delta/m}\int_{\mathbb{R}^{n}}{|P(\xi)||\hat{g}(\xi)|d\xi},
\end{align}
and
\begin{align}\label{Eq2.8}
\frac{|g(\gamma(x,t))-g(x)|}{t^{\alpha\delta /m}} \le \frac{|\gamma(x,t)-x|}{t^{\alpha\delta/m}}\int_{\mathbb{R}^{n}}{|\xi||\hat{g}(\xi)|d\xi} \le t^{\alpha-\alpha\delta/m}\int_{\mathbb{R}^{n}}{|\xi||\hat{g}(\xi)|d\xi}.
\end{align}
Inequalities (\ref{Eq2.6}) - (\ref{Eq2.8}) imply (\ref{Eq2.5}).

By (\ref{Eq2.4}) and (\ref{Eq2.5}) we have
\begin{align}
\biggl|\biggl\{{ x \in B(x_{0},R): \mathop{lim sup}_{t \rightarrow 0^{+}} \frac{|e^{itP(D)}(f)(\gamma(x,t))-f(x)|}{t^{\alpha \delta/m}}} > \lambda\biggl\}\biggl| \le \epsilon,
\end{align}
which  implies (\ref{Eq1.9}) for $f \in H^{s_{1} +\delta}(\mathbb{R}^{n})$ and almost every $x \in B(x_{0}, R)$. By the arbitrariness of $\epsilon$, in fact we can get (\ref{Eq1.9}) for all $f \in H^{s +\delta}(\mathbb{R}^{n})$, $s > s_{0}$. Next we will prove (\ref{Eq2.3}) for $q=min\{p,2\}$.

In order to prove (\ref{Eq2.3}), we decompose $f$ as
\[f=\sum_{k=0}^{\infty}{f_{k}},\]
where supp$ \hat{f_{0}} \subset B(0,1)$, supp$ \hat{f_{k}} \subset \{\xi: |\xi| \sim 2^{k}\}$, $k \ge 1$. It follows that
\begin{align}\label{Eq2.10}
& \biggl\|\mathop{sup}_{0<t<1} \frac{|e^{itP(D)}(f)(\gamma(x,t))-f(x)|}{t^{\alpha \delta/m}}\biggl\|_{L^{q}(B(x_{0},R))} \nonumber\\
&\le \sum_{k=0}^{\infty}{\biggl\|\mathop{sup}_{0<t<1} \frac{|e^{itP(D)}(f_{k})(\gamma(x,t))-f_{k}(x)|}{t^{\alpha \delta/m}}\biggl\|_{L^{q}(B(x_{0},R))}}.
\end{align}

For $k \lesssim 1$, because (\ref{Eq2.7}), (\ref{Eq2.8}) and $P(\xi)$ is continuous,
\begin{align}\label{Eq2.11}
&\biggl\|\mathop{sup}_{0<t<1} \frac{|e^{itP(D)}(f_{k})(\gamma(x,t))-f_{k}(x)|}{t^{\alpha \delta/m}}\biggl\|_{L^{q}(B(x_{0},R))} \nonumber\\
&\le \biggl\|\mathop{sup}_{0<t<1} \frac{|e^{itP(D)}(f_{k})(\gamma(x,t))-f_{k}(\gamma(x,t))|}{t^{\alpha \delta/m}}\biggl\|_{L^{q}(B(x_{0},R))} \nonumber\\
& \:\;\ +\biggl\|\mathop{sup}_{0<t<1} \frac{|f_{k}(\gamma(x,t))-f_{k}(x)|}{t^{\alpha \delta/m}}\biggl\|_{L^{q}(B(x_{0},R))} \nonumber\\
&\lesssim \|f\|_{H^{s_{1}+ \delta}(\mathbb{R}^{n})}.
\end{align}

For $k \gg 1$,
\begin{align}\label{Eq2.12}
&\biggl\|\mathop{sup}_{0<t<1} \frac{|e^{itP(D)}(f_{k})(\gamma(x,t))-f_{k}(x)|}{t^{\alpha \delta/m}}\biggl\|_{L^{q}(B(x_{0},R))} \nonumber\\
&\le \biggl\|\mathop{sup}_{2^{-\frac{mk}{\alpha}} \le t<1} \frac{|e^{itP(D)}(f_{k})(\gamma(x,t))-f_{k}(x)|}{t^{\alpha \delta/m}}\biggl\|_{L^{q}(B(x_{0},R))} \nonumber\\
& \:\:\ + \biggl\|\mathop{sup}_{0<t<2^{-\frac{mk}{\alpha}}} \frac{|e^{itP(D)}(f_{k})(\gamma(x,t))-f_{k}(x)|}{t^{\alpha \delta/m}}\biggl\|_{L^{q}(B(x_{0},R))} \nonumber\\
&:= I + II.
\end{align}

We first estimate $I$, from (\ref{Eq1.8}) we have
\begin{equation}\label{Eq2.13}
\biggl\|\mathop{sup}_{2^{-\frac{mk}{\alpha}}  \le t<1} |e^{itP(D)}(f_{k})(\gamma(x,t))|\biggl\|_{L^{p}(B(x_{0},R))} \lesssim 2^{(s_{0}+\frac{\epsilon}{2})k}\|f_{k}\|_{L^{2}(\mathbb{R}^{n})},
\end{equation}
hence,
\begin{align}\label{Eq2.15}
 I  &\le 2^{\delta k} \biggl\|\mathop{sup}_{2^{-\frac{mk}{\alpha}}  \le t < 1} |e^{itP(D)}(f_{k})(\gamma(x,t))-f_{k}(x)|\biggl\|_{L^{q}(B(x_{0},R))} \nonumber\\
&\le 2^{\delta k} \biggl\{\biggl\|\mathop{sup}_{2^{-\frac{mk}{\alpha}}  \le t < 1} |e^{itP(D)}(f_{k})(\gamma(x,t))|\biggl\|_{L^{q}(B(x_{0},R))} + \|f_{k}|\|_{L^{q}(B(x_{0},R))} \biggl\}\nonumber\\
&\lesssim 2^{\delta k} \biggl\{\biggl\|\mathop{sup}_{2^{-\frac{mk}{\alpha}} \le t < 1} |e^{itP(D)}(f_{k})(\gamma(x,t))|\biggl\|_{L^{p}(B(x_{0},R))} + \|f_{k}|\|_{L^{2}(B(x_{0},R))} \biggl\} \nonumber\\
&\lesssim 2^{\delta k}2^{(s_{0}+\frac{\epsilon}{2})k}\|f\|_{L^{2}(\mathbb{R}^{n})} \nonumber\\
&\lesssim 2^{-\frac{\epsilon k}{2}}\|f\|_{H^{s_{1}+ \delta}(\mathbb{R}^{n})}.
\end{align}

For $II$, by triangle inequality,
\begin{align}
II &\le \biggl\|\mathop{sup}_{0<t<2^{-\frac{mk}{\alpha}}} \frac{|e^{itP(D)}(f_{k})(\gamma(x,t))-f_{k}(\gamma(x,t))|}{t^{\alpha \delta/m}}\biggl\|_{L^{q}(B(x_{0},R))} \nonumber\\
& \:\:\ + \biggl\|\mathop{sup}_{0<t<2^{-\frac{mk}{\alpha}}} \frac{|(f_{k})(\gamma(x,t))-f_{k}(x)|}{t^{\alpha \delta/m}}\biggl\|_{L^{q}(B(x_{0},R))}.
\end{align}

Mean value theorem and Lemma \ref{lemma2.1} imply
\begin{align}\label{Eq2.16}
&\biggl\|\mathop{sup}_{0<t<2^{-\frac{mk}{\alpha}}} \frac{|f_{k}(\gamma(x,t))-f_{k}(x)|}{t^{\alpha \delta/m}}\biggl\|_{L^{q}(B(x_{0},R))} \nonumber\\
&\le  \biggl\|\mathop{sup}_{0<t<2^{-\frac{mk}{\alpha}}}  \frac{\int_{\mathbb{R}^{n}}{e^{i[\theta(x,t)x+(1-\theta(x,t))\gamma(x,t)]\cdot\xi}[\gamma(x,t)-x] \cdot \xi \hat{f_{k}}}(\xi)d\xi}{t^{\alpha\delta/m}} \biggl\|_{L^{2}(B(x_{0},R))} \nonumber\\
&\le 2^{-mk+\delta k}  \sum_{h=1}^{n}{\sum_{\mathfrak{l} \in \mathbb{Z}^{n}}{\frac{C_{n}}{(1+|\mathfrak{l}|)^{n+1}}\biggl\|\int_{\mathbb{R}^{n}}{e^{i(x+ \frac{\mathfrak{l}}{2^{k}})\cdot\xi}\xi_{h}\hat{f_{k}}}(\xi)d\xi \biggl\|_{L^{2}(B(x_{0},R))}}} \nonumber\\
&\lesssim 2^{-(m-1)k+\delta k}  \|\hat{f}_{k}\|_{L^{2}(\mathbb{R}^{n})} \nonumber\\
&\lesssim 2^{-s_{1}k}\|f\|_{H^{s_{1}+ \delta}(\mathbb{R}^{n})},
\end{align}
where $\theta(x,t) \in [0,1]$.

By Taylor's formula and Lemma \ref{lemma2.1}, we get
\begin{align}\label{Eq2.17}
&\biggl\|\mathop{sup}_{0<t<2^{-\frac{mk}{\alpha}}} \frac{|e^{itP(D)}(f_{k})(\gamma(x,t))-f_{k}(\gamma(x,t))|}{t^{\alpha \delta/m}}\biggl\|_{L^{q}(B(x_{0},R))} \nonumber\\
&\le \sum_{j=1}^{\infty}{\frac{2^{-\frac{mkj}{\alpha}+\delta k}}{j!} \biggl\|\mathop{sup}_{0<t<2^{-\frac{mk}{\alpha}}}|\int_{\mathbb{R}^{n}}{e^{i\gamma(x,t)\cdot\xi}P(\xi)^{j}\hat{f_{k}}}(\xi)d\xi|\biggl\|_{L^{q}(B(x_{0},R))}} \nonumber\\
&\le \sum_{j=1}^{\infty}{\frac{2^{-\frac{mkj}{\alpha}+\delta k}}{j!} \sum_{\mathfrak{l} \in \mathbb{Z}^{n}}{\frac{C_{n}}{(1+|\mathfrak{l}|)^{n+1}}\biggl\|\int_{\mathbb{R}^{n}}{e^{i(x+ \frac{\mathfrak{l}}{2^{mk}})\cdot\xi}P(\xi)^{j}\hat{f_{k}}}(\xi)d\xi \biggl\|_{L^{q}(B(x_{0},R))}}} \nonumber\\
&\le \sum_{j=1}^{\infty}{\frac{2^{-\frac{mkj}{\alpha}+\delta k}}{j!} \sum_{\mathfrak{l} \in \mathbb{Z}^{n}}{\frac{C_{n}}{(1+|\mathfrak{l}|)^{n+1}}\|P(\xi)^{j}\hat{f_{k}}(\xi)\|_{L^{2}(\mathbb{R}^{n})}}} \nonumber\\
&\le \sum_{j=1}^{\infty}{\frac{2^{-\frac{mkj}{\alpha}+\delta k}2^{mkj}}{j!} \|\hat{f_{k}}}(\xi)\|_{L^{2}(\mathbb{R}^{2})} \nonumber\\
&\lesssim 2^{-s_{1}k}\|f\|_{H^{s_{1}+ \delta}(\mathbb{R}^{n})}.
\end{align}

Inequalities (\ref{Eq2.15}), (\ref{Eq2.16}) and (\ref{Eq2.17}) yield for $k \gg 1$,
\begin{align}\label{Eq2.18}
\biggl\|\mathop{sup}_{0<t<T} \frac{|e^{itP(D)}(f_{k})(\gamma(x,t))-f_{k}(x)|}{t^{\alpha \delta/m}}\biggl\|_{L^{q}(B(x_{0},R))} &\lesssim 2^{-\frac{\epsilon k}{2}}\|f\|_{H^{s_{1}+ \delta}(\mathbb{R}^{n})}.
\end{align}

It is clear that (\ref{Eq2.3}) follows from (\ref{Eq2.10}), (\ref{Eq2.11}) and (\ref{Eq2.18}).

\begin{theorem}\label{theorem2.1}
For each Schwartz function $f$, there exists
\[\gamma(x,t) = x - e_{1}t^{\alpha}, \:\ e_{1} = (1, 0,...,0), \:\ 0< \alpha \le 1,\]
such that if
\begin{equation}\label{Eq2.19}
\mathop{lim}_{t \rightarrow 0^{+}}{\frac{e^{itP(D)}(f)(\gamma(x,t))-f(x)}{t^{\alpha}}}=0, \:\ a.e. \:\ x \in \mathbb{R}^{n},
\end{equation}
then $f\equiv 0.$
\end{theorem}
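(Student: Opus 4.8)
The plan is to pick the curve with a H\"older exponent strictly less than $1$: fix any $\alpha\in(0,1)$ and set $\gamma(x,t)=x-e_1 t^\alpha$. I would then show that for \emph{every} $x\in\mathbb{R}^n$ the limit in (\ref{Eq2.19}) actually exists and equals $-\partial_1 f(x)$. Granting this, the hypothesis (\ref{Eq2.19}) forces $\partial_1 f\equiv 0$, and a Schwartz function that does not depend on $x_1$ must vanish identically.

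First I would split
\[
e^{itP(D)}(f)(\gamma(x,t))-f(x)=\bigl[e^{itP(D)}(f)(\gamma(x,t))-f(\gamma(x,t))\bigr]+\bigl[f(\gamma(x,t))-f(x)\bigr].
\]
For the first bracket, write it as $\int_{\mathbb{R}^n}e^{i\gamma(x,t)\cdot\xi}\bigl(e^{itP(\xi)}-1\bigr)\hat f(\xi)\,d\xi$ and use $|e^{is}-1|\le|s|$ together with the growth hypothesis (\ref{Eq1.7}); since $\hat f$ is Schwartz, $\int_{\mathbb{R}^n}|P(\xi)||\hat f(\xi)|\,d\xi<\infty$, so (exactly as in (\ref{Eq2.7}))
\[
\bigl|e^{itP(D)}(f)(\gamma(x,t))-f(\gamma(x,t))\bigr|\le t\int_{\mathbb{R}^n}|P(\xi)||\hat f(\xi)|\,d\xi,
\]
and dividing by $t^\alpha$ leaves a quantity that is $O(t^{1-\alpha})\to 0$ uniformly in $x$ as $t\to 0^+$ --- here $\alpha<1$ is used. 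For the second bracket, $\gamma(x,t)-x=-e_1 t^\alpha$, so with $h=t^\alpha$,
\[
\frac{f(\gamma(x,t))-f(x)}{t^\alpha}=\frac{f(x-he_1)-f(x)}{h}\longrightarrow -\partial_1 f(x)\qquad(h\to 0^+),
\]
for every $x$, by differentiability of $f$. Adding the two pieces gives $\lim_{t\to 0^+}t^{-\alpha}\bigl(e^{itP(D)}(f)(\gamma(x,t))-f(x)\bigr)=-\partial_1 f(x)$ for every $x\in\mathbb{R}^n$.

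Comparing this with (\ref{Eq2.19}), which states the limit vanishes for a.e.\ $x$, yields $\partial_1 f(x)=0$ for a.e.\ $x$, hence for all $x$ by continuity of $\partial_1 f$. Thus $f(x_1,x_2,\dots,x_n)$ is independent of $x_1$; letting $x_1\to\infty$ with the remaining coordinates fixed and invoking the rapid decay of the Schwartz function $f$ forces $f(x)=0$ for every $x$, i.e.\ $f\equiv 0$.

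The computation is essentially routine; the one delicate point is the choice $\alpha<1$, which is precisely what makes the error term $t^{-\alpha}\bigl[e^{itP(D)}f(\gamma(x,t))-f(\gamma(x,t))\bigr]$ tend to $0$. (When $\alpha=1$ this term tends to $iP(D)f(x)$ instead, and the conclusion can genuinely fail --- e.g. $P(\xi)=\xi_1$ makes every Schwartz $f$ satisfy (\ref{Eq2.19}) with $\alpha=1$ --- which is why the statement only claims the existence of a suitable curve.) One should also record that $e^{itP(D)}f$ is well defined, continuous in $x$, and continuous at $t=0$ uniformly in $x$, all immediate from dominated convergence since $\hat f$ is Schwartz and the phase has modulus one.
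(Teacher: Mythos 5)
Your argument is correct and is essentially the paper's own proof: both isolate the leading $t^{\alpha}$ term of $e^{itP(D)}f(\gamma(x,t))-f(x)$ via the splitting into $e^{itP(D)}f(\gamma(x,t))-f(\gamma(x,t))$ (which is $O(t)=o(t^{\alpha})$ for $\alpha<1$ by $|e^{is}-1|\le|s|$ and (\ref{Eq1.7})) plus the difference quotient of $f$ along $-e_1$, identify that leading term with $\partial_1 f(x)=i\int e^{ix\cdot\xi}\xi_1\hat f(\xi)\,d\xi$, and conclude $f\equiv 0$ from $\partial_1 f\equiv 0$. Your closing observation is also well taken: the paper's final sentence that ``the same method is valid for $\alpha=1$'' is not justified in general (for $P(\xi)=\xi_1$ the two first-order terms cancel for every Schwartz $f$), but since the theorem only asserts the existence of a suitable curve, fixing $\alpha<1$ as you do suffices.
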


\begin{proof}
 when $0 <\alpha <1$, by Taylor's formula,
\begin{align}
e^{itP(D)}f(\gamma(x,t)) - f(x) = t^{\alpha} \int_{\mathbb{R}^{n}}{e^{ix\cdot \xi}\xi_{1}\hat{f}(\xi)d\xi} +o(t^{2\alpha}) +o(t).
\end{align}
Therefore,
\begin{align}
\mathop{lim}_{t \rightarrow 0^{+}}{\frac{|e^{itP(D)}(f)(\gamma(x,t))-f(x)|}{t^{\alpha}}} \ge \frac{1}{2} \biggl| \int_{\mathbb{R}^{n}}{e^{ix\cdot \xi}\xi_{1}\hat{f}(\xi)d\xi} \biggl|.
\end{align}
If $f$ is not zero, then there is a set $A$ with positive measure and a constant $c>0$, such that
\[\biggl| \int_{\mathbb{R}^{n}}{e^{ix\cdot \xi}\xi_{1}\hat{f}(\xi)d\xi} \biggl| \ge c.\]
this contradicts (\ref{Eq2.19}). The same method is valid for $\alpha =1$.
\end{proof}

\section{Proof of Theorem \ref{theorem3.1}}\label{Section 2}
\textbf{Proof of Theorem \ref{theorem3.1}. }Set
\[A_{k} =: \{\xi: \frac{|\xi_{1}|}{2^{m_{2}k/m_{1}}} + \frac{|\xi_{2}|}{2^{k}} \sim 1\}.\]
Consider $k$ such that
\[\{\xi: |\xi| \sim \lambda\} \cap A_{k} \neq \emptyset,\]
then
\[\lambda^{m_{1}/m_{2}} \le  2^{k} \le \lambda,\]
therefore
\[k \sim log\lambda.\]
Decompose
\[f=\sum_{k}f_{k},\]
where supp $\hat{f}_{k} \subset \{\xi: |\xi| \sim \lambda\} \cap A_{k}.$ If for each $k$,
\begin{equation}\label{Eq3.5}
\biggl\|\mathop{sup}_{0<t<1} |e^{itP_{m_{1}, m_{2}}(D)}(f_{k})(\gamma(x,t))|\biggl\|_{L^{p}(B(x_{0},R))} \lesssim \lambda^{\alpha} \|f\|_{L^{2}},
\end{equation}
then
\begin{align}\label{Eq2.3.1}
&\biggl\|\mathop{sup}_{0<t<1} |e^{itP_{m_{1}, m_{2}}(D)}(f)(\gamma(x,t))|\biggl\|_{L^{p}(B(x_{0},R))} \nonumber\\
&\lesssim \sum_{k}\biggl\|\mathop{sup}_{0<t<1} |e^{itP_{m_{1}, m_{2}}(D)}(f_{k})(\gamma(x,t))|\biggl\|_{L^{p}(B(x_{0},R))} \nonumber\\
&\lesssim \lambda^{\alpha} \sum_{k}\|f\|_{L^{2}} \nonumber\\
&\lesssim \lambda^{\alpha + \epsilon} \|f\|_{L^{2}}.
\end{align}
It is sufficient to show that if
\begin{equation}\label{Eq2.3.1}
\biggl\|\mathop{sup}_{t \in J} |e^{itP_{m_{1}, m_{2}}(D)}(f_{k})(\gamma(x,t))|\biggl\|_{L^{p}(B(x_{0},R))} \lesssim \lambda^{\alpha} \|f\|_{L^{2}}
\end{equation}
for each $J \in \mathfrak{J}$, then (\ref{Eq3.5}) holds.

For each $g \in L^{2}$, $G(x,t) \in L_{x}^{p^{\prime}}(B(x_{0},R), L_{t}^{1}(0,1))$, $1/p+1/p^{\prime} =1$. Define the  operator $T$ by
\[Tg:= \int_{\mathbb{R}^{2}}{e^{i\gamma(x,t)\cdot \xi + itP_{m_{1},m_{2}}(\xi)} \Psi(\xi)\hat{g}(\xi) d\xi},\]
\[ G_{J}(x,t)=G(x,t)\chi_{J}(t),\]
where $\Psi(\xi_{1}, \xi_{2})=\psi(\frac{\xi_{1}}{2^{m_{2}k/m_{1}}}, \frac{\xi_{2}}{2^{k}})\psi(\frac{\xi_{1}}{\lambda}, \frac{\xi_{2}}{\lambda})$, $\psi \in C_{c}^{\infty}$ equals to $1$ on $\{\xi: |\xi| \sim 1\}$ and rapidly decay outside. $\chi_{J}(t)$ is the characteristic function of $J$. By duality, it is enough to show that
\begin{equation}\label{Eq3.8}
\|T^{\star}G_{J}\|_{L^{2}}  \lesssim \lambda^{\alpha} \|G_{J}\|_{L_{x}^{p^{\prime}}(B(x_{0},R), L_{t}^{1}(0,1))}, \:\ J \in \mathfrak{J}
\end{equation}
implies
\begin{equation}\label{Eq3.9}
\|T^{\star}G\|_{L^{2}}  \lesssim \lambda^{\alpha} \|G\|_{L_{x}^{p^{\prime}}(B(x_{0},R), L_{t}^{1}(0,1))}.
\end{equation}

Inequality (\ref{Eq3.9}) is equivalence to
\begin{equation}\label{Eq3.10}
\biggl|\sum_{J,J^{\prime}\in \mathfrak{J}} \langle T^{\star}G_{J}, T^{\star}G_{J^{\prime}} \rangle \biggl|  \lesssim \lambda^{2\alpha} \|G\|^{2}_{L_{x}^{p^{\prime}}(B(x_{0},R), L_{t}^{1}(0,1))}.
\end{equation}

When $dist(J,J^{\prime}) \ge 100 \lambda^{1-m_{1}}$,
\begin{align}\label{Eq3.11}
|\langle T^{\star}G_{J}, T^{\star}G_{J^{\prime}} \rangle |  &= |\langle G_{J}, \chi_{J} TT^{\star}G_{J^{\prime}} \rangle | \nonumber\\
&\le \|G_{J}\|_{L_{x}^{p^{\prime}}(B(x_{0},R), L_{t}^{1}(0,1))} \|\chi_{J}TT^{\star}G_{J^{\prime}}\|_{L_{x}^{p}(B(x_{0},R), L_{t}^{\infty}(0,1))} \nonumber\\
&\le \|G_{J}\|_{L_{x}^{p^{\prime}}(B(x_{0},R), L_{t}^{1}(0,1))} \|\chi_{J}TT^{\star}G_{J^{\prime}}\|_{L_{x}^{\infty}(B(x_{0},R), L_{t}^{\infty}(0,1))}.
\end{align}
For each $x \in B(x_{0},R)$, $t \in J$,
\begin{equation}\label{Eq3.12}
|\chi_{J}TT^{\star}G_{J^{\prime}} (x,t)| \le \int_{B(x_{0,R})} {\int_{0}^{1} |K(x,y,t,t^{\prime})||G_{J^{\prime}}(y,t^{\prime})|dt^{\prime}dy, }
\end{equation}
where
\begin{align}
 |K(x,y,t,t^{\prime})| &= \biggl| \int_{\mathbb{R}^{2}}{e^{i[\gamma(x,t)-\gamma(y,t^{\prime})]\cdot (\xi_{1}, \xi_{2}) + i(t-t^{\prime})P_{m_{1},m_{2}}(\xi_{1}, \xi_{2})} \Psi^{2}(\xi_{1},\xi_{2}) d\xi_{1}d\xi_{2}} \biggl| \nonumber\\
&= 2^{(\frac{m_{2}}{m_{1}}+1)k} \biggl| \int_{\mathbb{R}^{2}}{e^{i\Phi_{x,y,t,t^{\prime}}(\eta_{1},\eta_{2})} \Psi^{2}(2^{\frac{m_{2}}{m_{1}}k}\eta_{1}, 2^{k}\eta_{2}) d\eta_{1}d\eta_{2}} \biggl|,
\end{align}
in which
\[\Phi_{x,y,t,t^{\prime}}(\eta_{1},\eta_{2}) = [\gamma(x,t)-\gamma(y,t^{\prime})]\cdot (2^{\frac{m_{2}}{m_{1}}k}\eta_{1}, 2^{k}\eta_{2}) + 2^{m_{2}k}(t-t^{\prime})P_{m_{1},m_{2}}(\eta_{1}, \eta_{2}).\]
Since
\[|t-t^{\prime}| \ge 100 \lambda^{1-m_{1}} \ge 100 2^{(\frac{m_{2}}{m_{1}}-m_{2})k}, \:\ |\eta_{1}|+|\eta_{2}| \sim 1,\]
it is easy to check that
\[|\nabla_{\eta} \Phi_{x,y,t,t^{\prime}}(\eta_{1},\eta_{2})| \gtrsim 2^{m_{2}k}|t-t^{\prime}|, \]
\[|D_{\eta}^{\beta} \Phi_{x,y,t,t^{\prime}}(\eta_{1},\eta_{2})| \lesssim  2^{m_{2}k}|t-t^{\prime}|, |\beta| \ge 2.\]
Integration by parts implies that for positive integer $N \gg \frac{m_{2}}{m_{1}}$,
\begin{equation}\label{Eq3.14}
 |K(x,y,t,t^{\prime})| \le 2^{(\frac{m_{2}}{m_{1}}+1)k} \frac{C_{N}}{(1+2^{m_{2}k}|t-t^{\prime}|)^{N}} (\frac{2^{m_{2}k/m_{1}}}{\lambda})^{N} \lesssim \lambda^{-O(N)}.
\end{equation}

Inequalities (\ref{Eq3.11}), (\ref{Eq3.12}), (\ref{Eq3.14}) and H\"{o}lder's inequality imply
\begin{equation}\label{Eq3.15}
|\langle T^{\star}G_{J}, T^{\star}G_{J^{\prime}} \rangle |   \lesssim \lambda^{-O(N)}\|G_{J}\|_{L_{x}^{p^{\prime}}(B(x_{0},R), L_{t}^{1}(0,1))} \|G_{J^{\prime}}\|_{L_{x}^{p^{\prime}}(B(x_{0},R), L_{t}^{1}(0,1))}
\end{equation}
when $dist(J,J^{\prime}) \ge 100 \lambda^{1-m_{1}}$.

For $J,J^{\prime}$ such that $dist(J,J^{\prime}) \le 100 \lambda^{1-m_{1}}$, by H\"{o}lder's inequality and (\ref{Eq3.8}) we have
\begin{align}\label{Eq3.16}
|\langle T^{\star}G_{J}, T^{\star}G_{J^{\prime}} \rangle |   &\lesssim \|T^{\star}G_{J}\|_{L^{2}} \|T^{\star}G_{J^{\prime}}\|_{L^{2}} \nonumber\\
&\lesssim \lambda^{2\alpha}\|G_{J}\|_{L_{x}^{p^{\prime}}(B(x_{0},R), L_{t}^{1}(0,1))} \|G_{J^{\prime}}\|_{L_{x}^{p^{\prime}}(B(x_{0},R), L_{t}^{1}(0,1))}.
\end{align}

It follows from (\ref{Eq3.15}), (\ref{Eq3.16}) and $1 \le p^{\prime} \le 2$ that
\begin{align}
\biggl|\sum_{J,J^{\prime}\in \mathfrak{J}} \langle T^{\star}G_{J}, T^{\star}G_{J^{\prime}} \rangle \biggl|  &\lesssim \sum_{J,J^{\prime}\in \mathfrak{J}, dist(J,J^{\prime}) \le 100 \lambda^{1-m_{1}}}|\langle T^{\star}G_{J}, T^{\star}G_{J^{\prime}} \rangle | \nonumber\\
&\:\:\ + \sum_{J,J^{\prime}\in \mathfrak{J}, dist(J,J^{\prime}) \ge 100 \lambda^{1-m_{1}}}|\langle T^{\star}G_{J}, T^{\star}G_{J^{\prime}} \rangle | \nonumber\\
&\lesssim \lambda^{2\alpha} \sum_{J \in \mathfrak{J}}\|G_{J}\|^{2}_{L_{x}^{p^{\prime}}(B(x_{0},R), L_{t}^{1}(0,1))} \nonumber\\
&\lesssim \lambda^{2\alpha} \|G\|^{2}_{L_{x}^{p^{\prime}}(B(x_{0},R), L_{t}^{1}(0,1))}, \nonumber
\end{align}
which implies (\ref{Eq3.10}).

\end{document}